\documentclass[11pt]{amsart}
\usepackage{amssymb}
\usepackage{graphics}
\usepackage{latexsym}
\usepackage{amsmath}
\usepackage{amssymb,amsthm,amsfonts}
\usepackage{diagrams}
\usepackage{amscd}
\usepackage[arrow, matrix, curve]{xy}
\usepackage{syntonly}

\setcounter{tocdepth}{2}
\ExecuteOptions{dvips} \marginparwidth 0pt \oddsidemargin 1.5
truecm \evensidemargin 1.5 truecm \marginparsep 1pt \topmargin 1pt
\textheight 22.5 truecm \textwidth 14.5 truecm

\title[Distribuation of CM points]{Distribuation of CM points of an infinite series of complete Calabi-Yau moduli spaces}
\author[Mao Sheng]{Mao Sheng}
\author[Jinxing Xu]{Jinxing Xu}
\email{msheng@ustc.edu.cn}\email{xujx02@ustc.edu.cn}
\address{School of Mathematical Sciences,
University of Science and Technology of China, Hefei, 230026, China}

\begin{document}

\theoremstyle{plain}
\newtheorem{thm}{Theorem}[section]
\newtheorem{theorem}[thm]{Theorem}
\newtheorem*{theorem*}{Theorem}
\newtheorem{lemma}[thm]{Lemma}
\newtheorem{corollary}[thm]{Corollary}
\newtheorem{proposition}[thm]{Proposition}
\newtheorem{addendum}[thm]{Addendum}
\newtheorem{variant}[thm]{Variant}
\theoremstyle{definition}
\newtheorem{construction}[thm]{Construction}
\newtheorem{notations}[thm]{Notations}
\newtheorem{question}[thm]{Question}
\newtheorem{problem}[thm]{Problem}
\newtheorem{remark}[thm]{Remark}
\newtheorem{remarks}[thm]{Remarks}
\newtheorem{definition}[thm]{Definition}
\newtheorem{claim}[thm]{Claim}
\newtheorem{assumption}[thm]{Assumption}
\newtheorem{assumptions}[thm]{Assumptions}
\newtheorem{properties}[thm]{Properties}
\newtheorem{example}[thm]{Example}
\newtheorem{conjecture}[thm]{Conjecture}
\newtheorem*{conjecture*}{Conjecture}
\numberwithin{equation}{thm}

\newcommand{\sA}{{\mathcal A}}
\newcommand{\sB}{{\mathcal B}}
\newcommand{\sC}{{\mathcal C}}
\newcommand{\sD}{{\mathcal D}}
\newcommand{\sE}{{\mathcal E}}
\newcommand{\sF}{{\mathcal F}}
\newcommand{\sG}{{\mathcal G}}
\newcommand{\sH}{{\mathcal H}}
\newcommand{\sI}{{\mathcal I}}
\newcommand{\sJ}{{\mathcal J}}
\newcommand{\sK}{{\mathcal K}}
\newcommand{\sL}{{\mathcal L}}
\newcommand{\sM}{{\mathcal M}}
\newcommand{\sN}{{\mathcal N}}
\newcommand{\sO}{{\mathcal O}}
\newcommand{\sP}{{\mathcal P}}
\newcommand{\sQ}{{\mathcal Q}}
\newcommand{\sR}{{\mathcal R}}
\newcommand{\sS}{{\mathcal S}}
\newcommand{\sT}{{\mathcal T}}
\newcommand{\sU}{{\mathcal U}}
\newcommand{\sV}{{\mathcal V}}
\newcommand{\sW}{{\mathcal W}}
\newcommand{\sX}{{\mathcal X}}
\newcommand{\sY}{{\mathcal Y}}
\newcommand{\sZ}{{\mathcal Z}}
\newcommand{\A}{{\mathbb A}}
\newcommand{\B}{{\mathbb B}}
\newcommand{\C}{{\mathbb C}}
\newcommand{\D}{{\mathbb D}}
\newcommand{\E}{{\mathbb E}}
\newcommand{\F}{{\mathbb F}}
\newcommand{\G}{{\mathbb G}}
\newcommand{\HH}{{\mathbb H}}
\newcommand{\I}{{\mathbb I}}
\newcommand{\J}{{\mathbb J}}
\renewcommand{\L}{{\mathbb L}}
\newcommand{\M}{{\mathbb M}}
\newcommand{\N}{{\mathbb N}}
\renewcommand{\P}{{\mathbb P}}
\newcommand{\Q}{{\mathbb Q}}
\newcommand{\R}{{\mathbb R}}
\newcommand{\SSS}{{\mathbb S}}
\newcommand{\T}{{\mathbb T}}
\newcommand{\U}{{\mathbb U}}
\newcommand{\V}{{\mathbb V}}
\newcommand{\W}{{\mathbb W}}
\newcommand{\X}{{\mathbb X}}
\newcommand{\Y}{{\mathbb Y}}
\newcommand{\Z}{{\mathbb Z}}
\newcommand{\id}{{\rm id}}
\newcommand{\rank}{{\rm rank}}
\newcommand{\END}{{\mathbb E}{\rm nd}}
\newcommand{\End}{{\rm End}}
\newcommand{\Hom}{{\rm Hom}}
\newcommand{\Hg}{{\rm Hg}}
\newcommand{\tr}{{\rm tr}}
\newcommand{\Sl}{{\rm Sl}}
\newcommand{\Gl}{{\rm Gl}}
\newcommand{\Cor}{{\rm Cor}}
\newcommand{\Aut}{\mathrm{Aut}}
\newcommand{\Sym}{\mathrm{Sym}}
\newcommand{\ModuliCY}{\mathfrak{M}_{CY}}
\newcommand{\HyperCY}{\mathfrak{H}_{CY}}
\newcommand{\ModuliAR}{\mathfrak{M}_{AR}}
\newcommand{\Modulione}{\mathfrak{M}_{1,n+3}}
\newcommand{\Modulin}{\mathfrak{M}_{n,n+3}}
\newcommand{\Gal}{\mathrm{Gal}}
\newcommand{\Spec}{\mathrm{Spec}}
\newcommand{\Jac}{\mathrm{Jac}}

\newcommand{\modulinm}{\mathfrak{M}_{AR}}

\newcommand{\mc}{\mathfrak{M}_{1,6}}

\newcommand{\mx}{\mathfrak{M}_{3,6}}

\newcommand{\tmc}{\widetilde{\mathfrak{M}}_{1,6}}

\newcommand{\tmx}{\widetilde{\mathfrak{M}}_{3,6}}

\newcommand{\smc}{\mathfrak{M}^s_{1,6}}

\newcommand{\smx}{\mathfrak{M}^s_{3,6}}

\newcommand{\tsmc}{\widetilde{\mathfrak{M}}^s_{1,6}}

\newcommand{\tsmx}{\widetilde{\mathfrak{M}}^s_{3,6}}

\newcommand{\proofend}{\hspace*{13cm} $\square$ \\}

\thanks{This work is supported by Chinese Universities Scientific Fund (CUSF), Anhui Initiative in Quantum Information Technologies (AHY150200) and National Natural Science Foundation of China (Grant No. 11622109, No. 11721101).}

\maketitle

\begin{abstract}
In the infinite series of complete families of Calabi-Yau manifolds $\tilde{f}_n: \tilde{\mathcal{X}}_n\rightarrow \mathfrak{M}_{n, n+3}$, where $n$ is an odd number, arising from cyclic covers of $\P^n$ branching along hyperplane arrangements (\cite{SXZ13}), the set of CM points is dense for $n=1, 3$ and finite for $n\geq 5$.
\end{abstract}


\section{Introduction}
\label{sec:introduction}
Recall that to a $\Q$-Hodge structure ($\Q$-HS) $h: \C^*\to \mathrm{GL}_{\R}(V\otimes_{\Q}\R)$, one attaches a $\Q$-algebraic group $MT(V)$, the so-called Mumford-Tate group, which is defined to be the smallest $\Q$-algebraic subgroup of $\mathrm{GL}_{\Q}(V)$ whose real points contain $h(\C^*)$. A basic result about Mumford-Tate group is that it is equal to the maximal $\Q$-algebraic subgroup of $\mathrm{GL}_{\Q}(V)$ which fixes all Hodge cycles occurring in $\bigoplus_{m,n}V^{\otimes m}\otimes V^{*\otimes n}$. A $\Q$-HS is said to be \emph{CM} if its attached Mumford-Tate group is commutative. Thus, the set of CM $\Q$-HSs makes a special kind of Hodge structures.

Indeed, a weight one $\Q$-polarized Hodge structure is of CM if and only if its corresponding Abelian variety, defined up to isogeny, has complex multiplication. It is well known that in the moduli space $\sA_g$ of $g$-dimensional (principally) polarized Abelian varieties, the set of CM points (a CM point is such a moduli point that represents an Abelian variety having complex multiplication) is Zariski dense. The same property holds for any Shimura subvariety in $\sA_g$. The celebrated Andr\'{e}-Oort conjecture for the $\sA_g$ case, which was recently solved by Tsimermann in \cite{Tsimerman}, asserts the converse statement: any irreducible component of the Zariski closure of a subset of CM points in $\sA_g$ is a Shimura subvariety. In general, we say a complex smooth  projective variety $X$ has CM if
 the $\Q$-HS on the middle cohomology $H^n(\mathcal{X}_s, \Q)$ is CM. Let $S$ be a smooth connected quasi-projective variety over $\C$ and $f: \mathcal{X}\rightarrow S$ be a smooth projective family,
we say a point $s\in S$ is a CM point if the fiber $\mathcal{X}_s$ has CM. In view of the Andr\'{e}-Oort conjecture,
we are naturally led to studying the distribution of CM points in a smooth projective family.
As a vast generalization of the Andr\'{e}-Oort conjecture, Bruno Klingler proposed
a general conjecture about the distribution of CM points in a $\Z$-variation of mixed
Hodge structures (Conjecture 5.3 in \cite{Klingler}). To our purpose, we state
Klingler's conjecture in the following form.
\begin{conjecture*}[Klingler]\label{conj:Klingler}
Let $S$ be a smooth connected quasi-projective variety over $\C$ and $f: \mathcal{X}\rightarrow S$ be a smooth projective family of relative dimension $n$. If $S$ contains a Zariski dense set of CM points then the period map
of the $\Z$-variation of
Hodge structures on the
 middle cohomology group $R^nf_*\Z_{\mathcal{X}}$ factors through
a Shimura variety.
\end{conjecture*}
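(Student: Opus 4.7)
The plan is to attack Klingler's conjecture by a Hodge-theoretic generalization of the Pila--Zannier strategy that Tsimerman used to settle Andr\'{e}--Oort for $\mathcal{A}_g$. Let $\mathbf{M}\subset\mathrm{GL}_{\Q}(V)$ denote the generic Mumford--Tate group of the variation $R^nf_*\Q_{\mathcal{X}}$. The period map $\varphi:S\to\Gamma\backslash D$ then factors through the Mumford--Tate subdomain $\Gamma_{\mathbf{M}}\backslash D_{\mathbf{M}}$, so the conclusion of the conjecture holds as soon as one shows that $(\mathbf{M},D_{\mathbf{M}})$ is a Shimura datum, i.e.\ that the adjoint group $\mathbf{M}^{\mathrm{ad}}$ is of Hermitian type. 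The whole problem thus becomes: \emph{Zariski density of CM points forces the generic Mumford--Tate group $\mathbf{M}$ to be of Hermitian type.}

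I would combine three ingredients. First, at each CM point $s\in S$ the Mumford--Tate group is a torus $\mathbf{T}_s\subset\mathbf{M}$, yielding a special point of $\Gamma_{\mathbf{M}}\backslash D_{\mathbf{M}}$, and these sweep out a Zariski dense subset of $\varphi(S)$ by hypothesis. Second, I would apply the Ax--Schanuel theorem for pure period maps of Bakker--Tsimerman to the graph of $\varphi$, which, for every algebraic subvariety of $\Gamma_{\mathbf{M}}\backslash D_{\mathbf{M}}$ meeting $\varphi(S)$ atypically, produces a weakly special subvariety cutting out the same intersection. Third, I would run the Pila--Wilkie o-minimal counting machine on a semi-algebraic fundamental domain of $D_{\mathbf{M}}$ against the Galois orbits of the CM points of $S$; this should force each CM orbit either to lie inside a weakly-special subvariety already contained in $\varphi(S)$, or to be too large to be produced by Galois. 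Iterating and invoking density then pushes $\varphi(S)$ itself to be weakly special, and a weakly-special subvariety whose underlying Mumford--Tate group equals the generic $\mathbf{M}$ can only exist when $\mathbf{M}$ is of Hermitian type, whence $\Gamma_{\mathbf{M}}\backslash D_{\mathbf{M}}$ is a Shimura variety.

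The hardest step will be the Galois-orbit lower bound required to feed Pila--Wilkie. In the classical Shimura situation this rests on the averaged Colmez conjecture of Yuan--Zhang and Andreatta--Goren--Howard--Madapusi--Pera, whose proof uses the arithmetic intersection theory of Shimura varieties in a crucial way; no comparable bound is presently known for CM fibers of an \emph{arbitrary} smooth projective family, since there is no substitute for the height machine on the non-Shimura Hodge locus. A second, related difficulty is excluding non-Hermitian $\mathbf{M}$ which nevertheless contain a Zariski-dense set of torus sub-Hodge structures; ruling this out seems to demand transcendence results genuinely stronger than Bakker--Tsimerman, perhaps a Pila--Wilkie statement with polylogarithmic rather than polynomial bounds. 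For these reasons the conjecture is presently out of reach in full generality, and the strategy above is essentially the program advocated by Klingler and by Klingler--Ullmo--Yafaev; the present paper accordingly sidesteps the obstacles by working in the specific Calabi--Yau families $\tilde{f}_n$, where the generic Mumford--Tate group and its CM sub-tori can be computed directly from the cyclic-cover construction.
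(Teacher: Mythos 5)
The statement you were asked about is stated in the paper as a \emph{conjecture} (attributed to Klingler, a generalization of Andr\'e--Oort); the paper gives no proof of it and does not claim one. Its actual contribution is to verify the conjecture's prediction in one instance: for the families $\tilde f_n$ with $n\ge 5$ the CM points are shown to be finite (hence not Zariski dense), consistent with the earlier result that the period map of $\tilde f_n$ does not factor through a Shimura variety. So there is nothing in the paper for your argument to be compared against, and your text is correctly understood not as a proof but as a survey of the Pila--Zannier--type program for attacking the conjecture.

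As such, your proposal does not establish the statement, and you say so yourself. The concrete gaps are exactly the ones you name: (i) there is no Galois-orbit lower bound for CM fibers of an arbitrary smooth projective family --- the averaged Colmez input and the height machinery are only available on Shimura varieties, which is precisely the setting the conjecture is trying to escape; (ii) the Ax--Schanuel theorem of Bakker--Tsimerman controls atypical intersections but does not by itself rule out a non-Hermitian generic Mumford--Tate group $\mathbf{M}$ admitting a Zariski-dense family of CM sub-tori; and (iii) even granting that $\mathbf{M}^{\mathrm{ad}}$ is of Hermitian type, one must still check that the Mumford--Tate domain $D_{\mathbf{M}}$, with its Griffiths-transversal structure, actually fibers over the associated Hermitian symmetric domain in a way that makes the period map factor through a Shimura variety; this requires an argument about the horizontal tangent distribution that your sketch omits. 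If your goal is to engage with this paper, the statement to prove is the Theorem (finiteness of CM points for $n\ge 5$, density for $n=1,3$), which the authors reduce, via the isomorphism $X\simeq C^n/S_n\ltimes N$ and a Mumford--Tate group comparison, to the Deligne--Mostow description for $n=3$ and to the Chen--Lu--Zuo finiteness theorem for superelliptic curves with CM Jacobian for $n\ge 5$.
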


Our aim is to examine Conjecture for an infinite series of families of Calabi-Yau (CY) manifolds, initially studied in our previous work \cite{SXZ13}. For each odd number $n$, let $r=\frac{n+3}{2}$, and let $\mathfrak{M}_{n, n+3}$ be the moduli space of $n+3$ hyperplanes arrangements of $\P^n$ in general position. In \cite{SXZ13}, we investigated several basic properties of universal families $f_n: \mathcal{X}_n\rightarrow \mathfrak{M}_{n, n+3}$ of $r$-fold cyclic covers of $\P^n$ branched along $n+3$ hyperplanes in general position. Note that the $n=1$ case is nothing but the classical universal family of elliptic curves. Among other results, we showed in loc. cit. that the family $f_n$ admits a simultaneous crepant resolution $\tilde{\mathcal{X}}_n\rightarrow \mathcal{X}_n$; the resulting CY family $\tilde{f}_n:\tilde{\mathcal{X}}_n\rightarrow \mathfrak{M}_{n, n+3}$ is actually complete and versal at each point; they all have Yukawa coupling length one. In this note, we are going to show the following
\begin{theorem*}\label{thm introduction: finite CM}
The set of CM points of the family $\tilde{f}_n$ is Zariski dense for $n=1,3$ and finite for $n\geq 5$.
\end{theorem*}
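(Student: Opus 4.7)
The natural strategy is to exploit the cyclic $\mu_r$ action on each fiber $\tilde{\mathcal X}_{n,s}$. It splits
\[
H^n(\tilde{\mathcal X}_{n,s},\Q)\otimes_\Q \Q(\zeta_r) = \bigoplus_{k=0}^{r-1} V_{k,s}
\]
into eigen Hodge substructures, and the fiber is CM if and only if every piece $V_{k,s}$ is. The trivial eigenspace is of Tate type, and the non-trivial eigenspaces are all Galois-conjugate to a single primitive $\Q(\zeta_r)$-eigen VHS $\mathbb V_1 \to \mathfrak M_{n,n+3}$, whose Hodge numbers are computed in \cite{SXZ13}. Thus the question reduces to the distribution of CM points for the polarized $\Q(\zeta_r)$-VHS $\mathbb V_1$.

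For $n=1$ one has $r=2$ and $\mathbb V_1$ is the elliptic VHS over the classical modular curve $\mathfrak M_{1,4}$, so density of CM points is the classical theorem. For $n=3$ one has $r=3$ and $\mathbb V_1$ has ``ball signature'' $(1,3)$ over $\Q(\zeta_3)$; in the spirit of the Deligne--Mostow hypergeometric theory its period map then realizes $\mathfrak M_{3,6}$, up to a finite morphism, as an open subset of a three-dimensional arithmetic complex ball quotient, i.e.\ a Picard modular Shimura variety. Density of CM follows from the density of special points in any connected Shimura variety.

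For $n\geq 5$ the Hodge numbers of $\mathbb V_1$ are no longer of ball signature, and the plan is to prove directly that the period map does \emph{not} factor through any proper Shimura subvariety of its classifying space. The ingredients are a computation of the generic Mumford--Tate group of $\mathbb V_1$ from the global monodromy of the hypergeometric VHS (known to be as large as possible inside the ambient Hodge group), and the verification that this Mumford--Tate group is incompatible with any proper Hermitian sub-datum of positive dimension. Combined with the completeness and versality of $\tilde f_n$ proved in \cite{SXZ13}, the period image then meets each positive-dimensional special subvariety of the classifying space in a proper analytic subset, while a CM fiber must map to one of the countably many zero-dimensional special points. A rigidity argument applied to $\mathbb V_1$ then produces the finiteness.

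The principal obstacle is this final finiteness step for $n\geq 5$: without Klingler's conjecture at one's disposal one must rule out accumulation of special points on the period image by hand, using the explicit list of Shimura sub-data compatible with the generic Mumford--Tate group together with the algebro-geometric description of $\tilde f_n$ from \cite{SXZ13}. This is precisely where the ``infinite series'' nature of the construction is essential: for $n\geq 5$ the Hodge numbers of $\mathbb V_1$ grow fast enough that no Hermitian symmetric domain of the same rank embeds Hodge-equivariantly into the classifying space, which is the geometric reason underlying the dichotomy.
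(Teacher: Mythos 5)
Your treatment of $n=1,3$ is essentially sound and close in spirit to the paper (the paper also invokes Deligne--Mostow to identify the relevant family with a Shimura family and concludes density of CM points from that). But for $n\geq 5$ your argument has a genuine gap, and it is exactly at the step you yourself flag as the ``principal obstacle.'' Knowing that the period map of $\mathbb V_1$ does not factor through a Shimura variety, or that the generic Mumford--Tate group is as large as possible, does \emph{not} by itself bound the number of CM fibers: the implication ``no Shimura factorization $\Rightarrow$ CM points are not Zariski dense'' is precisely the content of Klingler's conjecture (the generalization of Andr\'e--Oort that the paper states in the introduction), which is open. The ``rigidity argument'' you appeal to in the last step is not supplied and does not exist at this level of generality; a priori the countably many zero-dimensional special points in the period domain could still accumulate Zariski densely on the period image. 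So as written your $n\geq 5$ case assumes the conclusion.

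The idea you are missing is the reduction to curves. The paper uses the isomorphism $X\simeq C^n/(S_n\ltimes N)$, where $C$ is the $r$-fold cyclic cover of $\P^1$ branched at $n+3$ points, to identify $H^n(X,\Q)$ with $\bigl(H^1(C,\Q)^{\otimes n}\bigr)^{S_n\ltimes N}$, and then proves (by comparing Mumford--Tate groups through the map $G\to GL(V)$, whose kernel is central and diagonalizable) that $X$ has CM if and only if $\Jac(C)$ has CM. Since $2g(C)=(n+1)(r-1)$ gives $g\geq 9$ for $n\geq 5$, the finiteness theorem of Chen--Lu--Zuo for superelliptic curves of genus at least $8$ with CM Jacobian applies, and finiteness of CM fibers up to isomorphism follows; the passage from ``finitely many isomorphism classes'' to ``finitely many points of $\mathfrak M_{n,n+3}$'' uses that the Kodaira--Spencer map is everywhere an isomorphism, so the fibers of the map to the coarse moduli space are zero-dimensional. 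That external finiteness input (which itself ultimately rests on Tsimerman's proof of Andr\'e--Oort for $\sA_g$ and the averaged Colmez conjecture) is what replaces the unproved conjecture in your sketch; without some such substitute your plan cannot be completed.
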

It was early shown in our work \cite{SXZ15} that the period map of $\tilde f_n$ does not factor through a Shimura variety if $n\geq 5$ (see Theorem 6.7 \cite{SXZ15}). Therefore, the above theorem provides an evidence to Conjecture. Note that it remains an interesting open problem whether the one-dimensional mirror quintic CY family contains only finitely many CM points or not. By a result of Deligne \cite{Deligne} on the monodromy group of the mirror quintic family, it should be the case after Conjecture. As far as we know, our result provides the first example of complete CY families with finite CM points.

\section{Families from hyperplane arrangements}\label{sec:Families from hyperplane arrangements}
Given a hyperplane arrangement $\mathfrak{A}$ in $\P^n$ in general position, the cyclic cover of $\P^n$ branched along $\mathfrak{A}$ is an interesting algebraic variety. When the hyperplane arrangement $\mathfrak{A}$ moves in the coarse moduli space of hyperplane arrangements, we get a family of projective varieties. In this section, we collect some known facts about Hodge structures of these cyclic covers. Detailed proofs of all of the statements in this section can be found in \cite{SXZ13}.

We say an ordered arrangement $\mathfrak{A}=(H_1,\cdots, H_m)$ of hyperplanes in $\P^n$ is in general position if no $n+1$ of the hyperplanes intersect in a point, or equivalently, if the divisor $\sum_{i=1}^mH_i$ has simple normal crossings.

Given an odd number $n$, and let $r=\frac{n+3}{2}$. For each ordered hyperplane arrangement $(H_1,\cdots, H_{n+3})$ in $\P^n$ in general position, we can define a (unique up to isomorphism) degree $r$ cyclic cover of $\P^n$ branched along the divisor $\sum_{i=1}^{n+3}H_i$. In this way, if we denote  the coarse moduli space of   ordered $n+3$ hyperplane arrangements  in $\P^n$ in general position by $\mathfrak{M}_{n, n+3}$, then we obtain a universal family $f_n:\mathcal{X}_{n}\rightarrow \mathfrak{M}_{n, n+3}$ of degree $r$ cyclic covers of $\P^n$ branched along $n+3$ hyperplane arrangements in general position. In  \cite{SXZ13}, we constructed a simultaneous crepant resolution $\pi: \tilde{\mathcal{X}}_{n}\rightarrow \mathcal{X}_{n}$ for the family $f$ without changing the middle cohomology of fibers. Moreover, this simultaneous crepant resolution gives an $n$-dimensional projective  Calabi-Yau family which  is maximal in the sense that its Kodaira-Spencer map is an isomorphism at each point of $\mathfrak{M}_{n, n+3}$. We denote this smooth projective Calabi-Yau family by $\tilde{f}_n:\tilde{\mathcal{X}}_{n}\rightarrow \mathfrak{M}_{n, n+3}$.

Now we recall the relation between a cyclic cover of $\P^1$ branched along points and that of $\P^n$ branched along hyperplane arrangements.

Suppose $(p_1,\cdots, p_{n+3})$ is a collection of $n+3$ distinct points on $\P^1$, and put $H_i=\{p_i\}\times \P^1\times \cdots \times \P^1$. By the natural identification between $\P^n$ and the symmetric power $Sym^n(\P^1)$ of $\P^1$, we can view each $H_i$ as a hyperplane in $\P^n$.  Then it can be shown that $(H_1,\cdots, H_{n+3})$ is a hyperplane arrangement in $\P^n$ in general position. A direct computation shows that this construction gives an isomorphism between the moduli spaces $\mathfrak{M}_{1, n+3}$ and $\mathfrak{M}_{n, n+3}$. Moreover, for $r=\frac{n+3}{2}$, if we denote $C$ as the $r$-fold cyclic cover of $\P^1$ branched along the $n+3$ points $(p_1,\cdots, p_{n+3})$, and $X$ as the $r$-fold cyclic cover of $\P^n$ branched along the corresponding hyperplane arrangement $(H_1,\cdots, H_{n+3})$, then we have an isomorphism
\begin{equation}\label{equation:relation between X and C}
X \simeq C^n/S_n\ltimes N.
\end{equation}
Here $N$ is the kernel of the summation homomorphism $(\Z/r\Z)^n \rightarrow \Z/r\Z$. The action of $\Z/r\Z$  on $C$ is induced from the cyclic cover structure, and $S_n$ acts on $C^n$ by permutating the $n$ factors.

From the isomorphism (\ref{equation:relation between X and C}),
we see the natural $\Q$-mixed Hodge structure on the middle cohomology group $H^n(X, \Q)$ is pure. Moreover, since the  simultaneous crepant resolution $\tilde{\mathcal{X}}_{n}\rightarrow \mathcal{X}_{n}$ of the universal family $\mathcal{X}_{n}\xrightarrow{f_n} \mathfrak{M}_{n,n+3}$ does not change the middle cohomologies of the fibers, the two $\Q$-VHS (rational variation of  Hodge structures) $R^n\tilde{f}_{n*}\Q_{\tilde{\mathcal{X}}_{n}}$ and $R^n f_{n*}\Q_{\mathcal{X}_{n}}$ are isomorphic.

\section{Proof of the main result}

For  the smooth projective family of Calabi-Yau $n$-folds $\tilde{f}_n:\tilde{\mathcal{X}}_{AR}\rightarrow \mathfrak{M}_{n, n+3}$, we have the following proposition:

\begin{proposition}\label{prop:finite CM}
The family $\tilde{f}_3$ has a dense set of fibers that has CM. If $n\geq 5$ is an odd number, then up to isomorphism, there exist at most  finitely many fibers of $\tilde{f}_n$ that  has CM.
\end{proposition}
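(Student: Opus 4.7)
The plan is to transfer the problem from the Calabi-Yau family $\tilde{f}_n$ to the associated family of $r$-fold cyclic covers $C\to \P^1$ branched at $n+3$ distinct points, via the isomorphism $X\simeq C^n/(S_n\ltimes N)$ of (2.1) and the identification $\mathfrak{M}_{1,n+3}\cong \mathfrak{M}_{n,n+3}$. The key reduction is the equivalence that the fiber $\tilde{\mathcal{X}}_{n,s}$ has CM if and only if the corresponding curve $C_s$ has CM, i.e.\ the Jacobian $\Jac(C_s)$ has complex multiplication.

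To establish this equivalence, observe that $H^n(\tilde{X},\Q)=H^n(X,\Q)=H^n(C^n,\Q)^{S_n\ltimes N}$ is a sub-Hodge structure of a tensor construction on $H^1(C,\Q)$, so its Mumford-Tate group is a subquotient of that of $H^1(C,\Q)$; hence CM of $C_s$ forces CM of $\tilde{\mathcal{X}}_{n,s}$. For the converse I would extend scalars to $\Q(\zeta_r)$ and use the $\Z/r\Z$-eigenspace decomposition of $H^1(C,\C)$: since the individual eigenspaces can be read off as isotypic summands in $H^n(X,\C)$ under the residual cyclic action, the CM hypothesis on $X$ propagates to each eigenspace of $H^1(C,\Q(\zeta_r))$ and then descends to $H^1(C,\Q)$ itself.

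For $n=3$, the curves $C$ are genus-$4$ triple cyclic covers with uniform weights $1/3$, and a Chevalley-Weil character computation shows the nontrivial $\Z/3\Z$-isotypic components of $H^1(C,\C)$ carry Hodge signatures $(1,3)$ and $(3,1)$. This is one of the classical Deligne-Mostow ball-quotient cases, so the period map factors through an arithmetic quotient $\Gamma\backslash\B^3$ of PEL type, which is a Shimura variety; density of CM points in Shimura varieties then pulls back to a dense set of CM fibers of $\tilde{f}_3$. For odd $n\geq 5$, I would invoke the finiteness theorem of Chen-Lu-Zuo \cite{CLZ} for CM points in families of superelliptic curves $y^r=\prod_{i=1}^{n+3}(x-p_i)$ whose period map does not factor through a Shimura subvariety; the required non-factorization hypothesis is furnished by \cite{SXZ15}, Theorem 6.7, and alternatively for $r\geq 4$ some intermediate $\Z/r\Z$-isotypic component of $H^1(C,\C)$ has Hodge signature with both entries $\geq 2$, violating the ball condition and hence precluding a PEL Shimura structure on the image.

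The main obstacle is the first step: rigorously tracking the CM property through the equivariant quotient $X=C^n/(S_n\ltimes N)$, and in particular the converse direction that CM of $\tilde{\mathcal{X}}_{n,s}$ forces CM of $C_s$. Once this transfer is in place, the two cases reduce respectively to the classical density of CM points on a Shimura variety (for $n=3$) and to a direct application of \cite{CLZ} (for $n\geq 5$).
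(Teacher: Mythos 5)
Your overall strategy coincides with the paper's: reduce to the curve family via $X\simeq C^n/(S_n\ltimes N)$, prove that $X$ has CM if and only if $C$ does, then quote Deligne--Mostow for $n=3$ and Chen--Lu--Zuo for $n\geq 5$. But the step you yourself flag as the main obstacle --- the converse implication, that CM of $H^n(X,\Q)$ forces CM of $H^1(C,\Q)$ --- is genuinely not closed by your sketch, and the mechanism you propose fails as stated. Writing $W_{\C}=\bigoplus_{i=1}^{r-1}W_i$ for the $\Z/r\Z$-eigenspace decomposition of $W_\C=H^1(C,\C)$, one computes $(W_{\C}^{\otimes n})^{S_n\ltimes N}=\bigoplus_i\bigwedge^n W_i$, and the residual $\Z/r\Z$-action on $X$ acts on the summand $\bigwedge^n W_i$ through the character $\zeta_r^{ni}$. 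Since $n=2r-3$, one has $\gcd(n,r)=\gcd(3,r)$, so whenever $3\mid r$ this character does not separate the eigenspaces; in the case $n=r=3$ the residual action on $H^3(X,\C)$ is in fact trivial, so ``reading off the individual eigenspaces as isotypic summands under the residual cyclic action'' is impossible precisely in one of the two cases of the proposition. Moreover, even after isolating $\bigwedge^n W_i$, you still need an argument that commutativity of the Mumford--Tate group acting on $\bigwedge^n W_i\cong W_i^*\otimes\det W_i$ implies commutativity on $W_i$ (this does hold, because $\dim W_i=n+1$ makes the kernel of $GL(W_i)\to GL(\bigwedge^n W_i)$ finite, but it must be said), together with a descent from $\Q(\zeta_r)$-coefficients to $\Q$, since Mumford--Tate groups are defined over $\Q$ and the individual $W_i$ are not $\Q$-sub-Hodge structures.

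The paper closes this gap with a single group-theoretic argument that avoids eigenspaces altogether: let $G\subset GL(W)$ be the centralizer of the $\Z/r\Z$-action and $\varphi:G\to GL(V)$ the induced map on $V=(W^{\otimes n})^{S_n\ltimes N}$; its kernel $K$ is a central diagonalizable subgroup, $h_V=\varphi\circ h_W$, so $MT(W)\subset\varphi^{-1}(MT(V))$, and if $MT(V)$ is a torus then $\varphi^{-1}(MT(V))$ is diagonalizable, hence so is $MT(W)$. You should either supply this argument or repair the eigenspace route along the lines above. Two secondary points: the Chen--Lu--Zuo theorem as the paper uses it is the unconditional statement that there are at most finitely many superelliptic curves of genus $g\geq 8$ with CM Jacobian, so you do not need to import a non-factorization hypothesis from \cite{SXZ15}, but you do need the genus computation $2g=(n+1)(r-1)$, giving $g\geq 9$ for $n\geq 5$, to verify the hypothesis of \cite{CLZ}.
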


\begin{proof}
First note that since the two $\Q$-VHS $R^n\tilde{f}_{n*}\Q_{\tilde{\mathcal{X}}_{n}}$ and $R^n f_{n*}\Q_{\mathcal{X}_{n}}$ are isomorphic, we only need to prove this theorem for the family $f_n$.
By the discussions in Section \ref{sec:Families from hyperplane arrangements}, for each fiber $X$ of $f_n$,  we can find $n+3$ distinct point $p_1, \cdots, p_{n+3}$ on $\P^1$, such that
\begin{equation}\label{equ:cor between X and C in general}
X\simeq C^n/S_n\ltimes N
\end{equation}
Here $C$ is the  $r$-fold cyclic cover of $\P^1$ branched along $p_1, \cdots, p_{n+3}$, the group $N$ is the kernel of the summation homomorphism $\oplus_{i=1}^n\Z/ r\Z\xrightarrow{\sum}\Z/r\Z$, and $S_n$ is the permutation group of $n$ elements. The action of $S_n$ on the product $C^n$ is by permutating the factors, and  the action of $N$ is induced by the cyclic cover action of $\Z/r\Z$ on $C$.

By the isomorphism \eqref{equ:cor between X and C in general}, we get an isomorphism of cohomology groups:
\begin{equation}\label{equ:cor between cohomology groups}
H^n(X, \ \Q)\simeq H^n(C^n, \ \Q)^{S_n\ltimes N}
\end{equation}
where $H^n(C^n, \ \Q)^{S_n\ltimes N}$ means the $S_n\ltimes N$-fixed part of $H^n(C^n, \ \Q)$. Let $W=H^1(C, \ \Q)$, and $V=H^n(X, \ \Q)$. As a subgroup of $\oplus_{i=1}^{n} \Z/ r\Z$, the group $N$ acts naturally on the tensor product $W^{\otimes n}$. We can also describe the  action of $S_n$ on $W^{\otimes n}$:
$$
\sigma \cdot \alpha_1\otimes \alpha_2\otimes \cdots \otimes \alpha_n=(-1)^{sgn (\sigma)}\cdot \alpha_{\sigma^{-1}(1)}\otimes \alpha_{\sigma^{-1}(2)}\otimes \cdots \otimes \alpha_{\sigma^{-1}(n)}
$$
for  $\sigma\in S_n$ and $\alpha_i\in W$ , $1\leq i\leq n$. Here $sgn(\sigma)=\pm 1$ means the signal of the permutation $\sigma$. By K\"unnenth formula, the isomorphism \eqref{equ:cor between cohomology groups} implies the following isomorphism:
\begin{equation}\label{equ:cor between W and V}
V\simeq (W^{\otimes n})^{S_n\ltimes N}
\end{equation}

Note $V$ admits a $\Q$-Hodge structure of  weight $n$, and $W$ admits a weight one $\Q$-Hodge structure. Moreover, the isomorphism \eqref{equ:cor between W and V} is an isomorphism between $\Q$-Hodge structures of weight $n$, with the Hodge structure on $(W^{\otimes n})^{S_n\ltimes N}$ induced from the Hodge structure on $W$. A direct computation shows that the genus $g$ of $C$ satisfies the following relation:
$$
2g=(n+1)(r-1)
$$
Recall $r=\frac{n+3}{2}$. So if $n\geq 5$, then $g\geq 9$. Note the main result in \cite{CLZ} asserts that  if $g\geq 8$, then up to isomorphism, there exist at most  finitely many superelliptic curves of genus $g$ with CM Jacobian. Then the $n\geq 5$ cases follows from this result and  the following claim. For the  $n=3$ case, the
corresponding family of curves is the universal family of cyclic triple
 covers of $\P^1$ branched along six distinct points, and by \cite{DM} this
 curve family is a Shimura family. It is well-known that the CM points
 are dense in a Shimura family. Then the $n=3$ case also follows from the following claim.

\textbf{Claim}: $V$ \textit{ has CM if and only if } $W$ \textit{ has CM}.
\end{proof}

\textbf{Proof of Claim}:
We first define the $\Q$-subgroup $G$  of $GL(W)$ consisting of elements commutating with the action of $\Z/r\Z$ on $W$. i.e.,
$$
G:=\{g\in GL(V)| g\cdot \alpha=\alpha \cdot g, \ \forall \alpha \in \Z/r\Z\}
$$
Then $G$ acts on $V=(W^{\otimes n})^{S_n\ltimes N}$ by:
$$
g\cdot \alpha_1\otimes \alpha_2\otimes \cdots \otimes \alpha_n= g\cdot \alpha_1\otimes g\cdot \alpha_2\otimes \cdots \otimes g\cdot \alpha_n
$$
for $g\in G$ and $\alpha_1\otimes \alpha_2\otimes \cdots \otimes \alpha_n\in (W^{\otimes n})^{S_n\ltimes N}$.
In this way, we get a homomorphism between $\Q$-algebraic groups:
$$
\varphi: G\rightarrow GL(V).
$$
By considering the complex points, we can verify directly  that the kernel $K$ of $\varphi$ is a diagonalizable group lying in the center of $G$. We also have the following commutative diagram:
\begin{diagram}\label{diag:MT group}
\C^*&  \rTo^{h_W} & G(\R)\\
& \rdTo_{h_V} &\dTo_{\varphi}  \\
& &GL(V_{\R})
\end{diagram}
Here $h_V$ and $h_W$ are the  morphisms associated with the Hodge structures on $V$ and $W$ respectively.

Note the Mumford-Tate group $MT(W)$ is contained in $G$, and by  the commutative diagram above, we get $MT(V)\subset \varphi(MT(W))$. Hence $W$ has CM implies $V$ has CM.

Note also  the following exact sequence:
$$
1\rightarrow K \rightarrow \varphi^{-1}(MT(V))\rightarrow MT(V)
$$
By this exact sequence, if $V$ has CM, then $\varphi^{-1}(MT(V))$ is a diagonalizable group whose real points contain $h_W(\C^*)$, and hence $MT(W)\subset \varphi^{-1}(MT(V))$ is a torus, and $W$ has CM. This finishes the proof of Claim.     \hfill $\square$

As a corollary, we can deduce  our main result.

\begin{theorem}
If $n=3$, then the set of CM points of the family $\tilde{f}_n$ is Zariski dense in $\mathfrak{M}_{n, n+3}$. If $n\geq 5$ is an odd number, then the set of CM points of the family $\tilde{f}_n$ is finite.
\end{theorem}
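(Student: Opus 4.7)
The plan is to derive the theorem directly from Proposition \ref{prop:finite CM}. For the $n=3$ case, the first clause of the proposition already asserts that the set of $s \in \mathfrak{M}_{3,6}$ whose fiber $\tilde{\mathcal{X}}_s$ has CM is Zariski dense, which is exactly the desired statement; nothing further is needed.

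For $n \geq 5$, Proposition \ref{prop:finite CM} produces only finitely many isomorphism classes of CM fibers, whereas the theorem requires finitely many CM points of $\mathfrak{M}_{n,n+3}$. To bridge the gap I would establish that the natural forgetful map sending $s \in \mathfrak{M}_{n,n+3}$ to the isomorphism class of $\tilde{\mathcal{X}}_s$ has uniformly finite fibers. Two ingredients enter: first, the family $\tilde{f}_n$ is versal at every point (the Kodaira-Spencer map is an isomorphism, by \cite{SXZ13}), which forces the classifying map to be étale onto its image and hence to have discrete fibers; second, the only global ambiguity is the $S_{n+3}$-action on $\mathfrak{M}_{n,n+3}$ by relabeling hyperplanes, because projective equivalences of $\P^n$ are already quotiented out in the definition of $\mathfrak{M}_{n,n+3}$. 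The fibers are therefore bounded in cardinality by $(n+3)!$ (possibly adjusted by a finite contribution from automorphisms of $\tilde{\mathcal{X}}_s$ that permute cyclic-cover presentations), and finiteness of CM isomorphism classes immediately yields finiteness of the CM locus in $\mathfrak{M}_{n,n+3}$.

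The main technical point to verify is that the underlying unordered branch arrangement is intrinsically recoverable from the isomorphism class of the resolved Calabi-Yau $\tilde{\mathcal{X}}_s$. A clean way to see this is via the $\Z/r\Z$-deck transformation action on $\mathcal{X}_s$, which extends to the equivariantly constructed crepant resolution $\tilde{\mathcal{X}}_s$: the quotient recovers $\P^n$ together with the unordered branch divisor, so the arrangement is pinned down up to projective equivalence. Thus up to the finite $S_{n+3}$-action on labelings the classifying map is essentially injective, and the $n \geq 5$ clause of the theorem follows from Proposition \ref{prop:finite CM}.
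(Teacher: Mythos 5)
Your proposal is correct and follows essentially the same route as the paper: reduce to Proposition \ref{prop:finite CM}, then use the Kodaira--Spencer isomorphism to conclude that the classifying map to the coarse moduli space of fibers has zero-dimensional (hence finite) fibers, so the preimage of the finite set of CM isomorphism classes is finite. The additional material on bounding fiber cardinality by $(n+3)!$ and recovering the branch arrangement from the $\Z/r\Z$-deck transformation is not needed for the conclusion and is not used in the paper's argument.
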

\begin{proof}
We only need to prove the $n\geq 5$ cases. Let $\mathcal{M}_n$ be the coarse moduli space of the fibers of $\tilde{f}_n$,
then the family $\tilde{f}_n$ induces a morphism of quasi-projective  varieties
$\varphi: \mathfrak{M}_{n, n+3}\rightarrow \mathcal{M}_n$. Let
$S$ be the set of CM points in $\mathfrak{M}_{n, n+3}$. Then Proposition
\ref{prop:finite CM} implies that the image $\varphi(S)$ is a finite set in
$\mathcal{M}_n$. On the other hand, since the Kodaira-Spencer map of $\tilde{f}_n$
is an isomorphism at each point of  $\mathfrak{M}_{n, n+3}$, we know each fiber of
$\varphi$ is a zero-dimensional close subvariety of $\mathfrak{M}_{n, n+3}$.
Then $S=\varphi^{-1}(\varphi(S))$ is a zero-dimensional close subvariety of $\mathfrak{M}_{n, n+3}$.
So $S$ is a finite set.

\end{proof}

\textbf{Acknowledgements}
We thank Professor Kang Zuo for several enlightening conversations related to this work.  A primitive form of Conjecture \ref{conj:Klingler} for Calabi-Yau varieties has already appeared in the work \cite{Zhang} of Yi Zhang, who left us on April 2nd, 2019. The first named author is very grateful to Yi Zhang for his generous help and warm encouragement during different periods of the career. The sudden pass-away of Yi Zhang is a great loss to his family, teachers and friends.  

\end{document}